\newtheorem{theorem}{Theorem}
\newtheorem{definition}[theorem]{Definition}
\newtheorem{lemma}[theorem]{Lemma}
\newtheorem{remark}[theorem]{Remark}
\newtheorem*{acknowledgement}{Acknowledgement}
\newcommand{\E}{\mathbb E}
\newcommand{\F}{\mathcal F}
\renewcommand{\P}{\mathbb P}
\newcommand{\kom}[1]{}
\renewcommand{\kom}[1]{{\bf [#1]}}
\newcounter{komcounter}
\numberwithin{komcounter}{section}
\title{
Hiring and firing -- a signaling game}
\author{Erik Ekström\\\textit{Department of Mathematics, Uppsala University}\\ \\
Topias Tolonen-Weckström\\\textit{Department of Mathematics, Uppsala University}
}
\date{February 12, 2024} 
\begin{document}

\maketitle

\begin{abstract}
We study a signaling game between an employer and a potential employee, where the employee has private information regarding their production capacity. At the initial stage, the employee communicates a salary claim, after which 
the true production capacity is gradually revealed to the employer as the unknown 
drift of a Brownian motion representing the revenues generated by the employee.
Subsequently, the employer has the possibility to choose a time to fire the employee 
in case the estimated production capacity falls short of the salary. In this set-up, we use filtering and optimal stopping theory to
derive an equilibrium in which the employee provides a randomized salary claim and the employer uses a threshold strategy in terms of the conditional probability for the high production capacity. 
The analysis is robust in the sense that various extensions of the basic model can be solved using the same methodology, including 
cases with positive firing costs, incomplete information about one's own type, as well as an additional interview phase.
\end{abstract}

\section{Introduction}\label{intro}

Incomplete information is a key ingredient in many hiring processes, where full knowledge about the true capacity of a potential employee is rarely available to the employer at the hiring time. Instead, if the candidate is hired, such information
is gradually revealed to the employer over time. On the other hand, the potential employee would typically 
possess more accurate information, and would use this additional information when providing their salary claim.
Naturally, a high salary is costly for the employer, and thus increases the risk for the employee of being fired.
Therefore, there
is a trade-off in the choice between a high salary claim to increase personal income and a small salary claim to decrease the risk of being fired.

To model one possible instance of the strategic interaction between an employer and a potential employee, 
we set up and study a stochastic game with asymmetric information between two players.
The game is informally described as follows. 
\begin{itemize}
\item[(i)]
The capacity $\mu$ of the employee (Player 1) is a random variable with a known two-point distribution.
\item[(ii)]
At time $t=0$, Player 1 learns about the realization of the random variable $\mu$, and presents to the employer (Player 2) a non-negotiable salary claim $C$; the salary can only take two values.
\item[(iii)]
At time 0, Player 2 observes the salary claim, and subsequently also noisy observations of $\mu$, based upon which 
a choice is made of a stopping time $\tau$ to terminate the employment; here $\tau=0$ corresponds to a case in which the salary claim is not accepted (no hiring), $0<\tau<\infty$ corresponds to an accepted salary claim, but with
firing in finite time, and $\tau=\infty$ to an accepted salary claim, with no firing taking place. 
\item[(iv)]
Up to the termination time $\tau$, Player~1 receives compensation at the chosen rate $C$ per unit of time. Player~2, on the other hand, earns a net payment stream consisting of increments of a stochastic process $\mu t+ \sigma W_t-Ct$, where $W$ is a Brownian motion which accounts for random fluctuations about the mean rate $\mu-C$.
\end{itemize}

The above game is a {\em signaling game}, with two possible types of Player 1, corresponding to the two possible values of $\mu$, and where Player 1 sends a signal by choosing the salary level $C$. As such, there is an incomplete and asymmetric information structure since the players have different knowledge about $\mu$. 

Variants of such signaling games with asymmetric information have a long history in literature on hiring of staff and salary formation.
A classical reference is \cite{S},
where an example with a job seeker that can have two different types is studied. The job seeker knows their own type and
chooses an education level, where the cost of education depends on the type, thereby conveying information to the employer. 
In the set-up of \cite{S}, all information is conveyed at the initial time,
and the signal that consists of the chosen education level does not influence the actual ability of the employee.
\cite{AP} and \cite{DG2} study extensions with a type-dependent continuation value for the job-seeker, thus 
allowing for a future change in the salary level.
In particular, the set-up includes cases of 
employer learning based on an additional report (such as an interview phase) or based on the employee performance on-the-job. For related studies allowing for uncertainty about one's own type, see \cite{W}, and for more possible types of the job-seeker and competition between employers, see \cite{DGKM}.
Also, a signaling game outside of the job market is explored in \cite{DG}, where an owner of a company and several
potential buyers are considered. The seller holds private information of the company type, and buyers learn gradually 
from noisy observations of the unknown type and from the actions (lack of actions) of the seller. 
The hiring problem that we consider is also related to so-called principal-agent problems (see e.g. \cite{CPT}, \cite{HM} and \cite{Sa}), in which a principal seeks to set-up a compensation scheme to control the effort of the agent. 

The methods we use to derive a PBE of the signaling game rely on a combination of stochastic filtering and stochastic control theory. For stochastic filtering of the drift of a diffusion process we refer to \cite{LS}, and for a classical application to sequential hypothesis testing of the drift of Brownian motion, where filtering and optimal stopping theory is combined, see \cite[Chapter 4]{Sh}. For other studies of combined filtering and control, see e.g.  \cite{DA}, \cite{EV} and \cite{La} for single-agent problems, and \cite{CR}, \cite{ELO} and \cite{G} for strategic set-ups.

In contrast to the set-up of \cite{S} and many subsequent papers, the current study does not use education level as a signal. 
Instead, we equip the employee with the right to provide a salary claim, which then is paid out continuously until a possible firing time. 
This allows us to study the trade-off 
between a high personal income and the risk of becoming a burden for the company.
In line with the literature on signaling games as above, see also \cite{FT}, \cite{H}, and \cite{OR}, we use the concept of {\em perfect Bayesian equilibrium} (PBE) as a solution concept. We show the existence of a semi-separating PBE, in which the strong type always chooses the high salary, whereas the weak type randomizes between the low and the high salary. 

In Section~\ref{sec2}, a precise formulation of our hiring-and-firing game is presented, and Section~\ref{sec3} 
recalls some standard results from filtering theory. In Section~\ref{sec4} we argue heuristically to derive a candidate equilibrium of strategies; the candidate equilibrium is then verified to in fact be a PBE.
While our Bayesian game set-up is rather simplistic with only two possible types of the employee and two possible salary claims, it may serve as a benchmark for more involved problems. This is illustrated in Section~\ref{sec5}, where a few such extensions are briefly discussed. For these extensions, the solution of the benchmark case presented in Sections~\ref{sec2}-\ref{sec4} is utilized.

\section{Set-up}\label{sec2}

To describe the game in further detail, let $W$ be a standard Brownian motion, and let $\mu$ be a modified Bernoulli distributed random variable independent of $W$ with
\[\P(\mu=\mu_1)=p=1-\P(\mu=\mu_0),\] 
where $\mu_0$, $\mu_1$ and $p$ are known constants with $\mu_0<\mu_1$ and $p\in(0,1)$.
We assume that the employee (Player 1) generates to the employer (Player 2) a payment stream modeled as the increments of a process 
\[X_t = \mu t + \sigma W_t,\]
where $\sigma$ is a positive constant. The random variable $\mu$ will be referred to as {\em the
capacity} of the employee.

Player~1 knows their own type, i.e. the realization of their capacity $\mu$, and gives at the initial time $t=0$ a salary claim $C$ in the set $\{c_0,c_1\}$, 
where $0<c_0<c_1$. More precisely, allowing for randomized strategies, a strategy of Player~1 consists of a pair 
$a=(a_0,a_1)\in\mathcal P$, where $\mathcal P=[0,1]^2$ is the unit square. Here $a_i$ represents the conditional probability 
of choosing $C=c_1$ given that Player~1 is of type $i$, $i=0,1$. 

To describe the possible strategies of Player~2, denote by $\mathbb F^{X}=(\mathcal F^{X}_t)_{t\geq 0}$ 
the augmentation of the filtration generated by the process $X$, and by $\mathbb F=(\mathcal F_t^{X,C})_{t\geq 0}$ the augmentation of the filtration generated by the process $X$ and the random variable $C$.
Also, let $\mathcal T^X$ be the collection of $\mathbb F^{X}$-stopping times, and  
$\mathcal T$ be the collection of $\mathbb F$-stopping times.
Clearly, since $C$ only takes two possible values, any stopping time $\tau\in\mathcal T$ 
can be decomposed as
\begin{equation}
\label{tau}
\tau=\left\{\begin{array}{ll}
\tau_0 & \mbox{on }\{C=c_0\}\\
\tau_1 & \mbox{on }\{C=c_1\},\end{array}\right.
\end{equation}
where $(\tau_0,\tau_1)\in\mathcal T^X\times\mathcal T^X$. Conversely, defining $\tau$ by \eqref{tau} for a given 
pair $(\tau_0,\tau_1)\in\mathcal T^X\times\mathcal T^X$ yields that $\tau\in\mathcal T$.
Thus we may identify $\mathcal T$ with $\mathcal T^X\times\mathcal T^X$, and we therefore write  
$\tau=(\tau_0,\tau_1)$.

In addition to a pair $(a,\tau)\in\mathcal P\times \mathcal T$ of strategies, the definition of a perfect Bayesian equilibrium (given below) also requires the specification of a belief system $\Pi_0=(\Pi_0^0,\Pi_0^1)\in\mathcal P$. Here $\Pi_0^i$ represents
the probability that Player~2 assigns to the event $\{\mu=\mu_1\}$ conditional on the signal $C=c_i$, $i=0,1$.

The payoff structure of the game is now described as follows. Up to the stopping time $\tau$, Player 1 receives compensation for their work at rate $C$ per unit of time. Player 2, on the other hand, receives increments of the net payment stream 
\begin{equation}
\label{net}
X_t-Ct=(\mu-C)t + \sigma W_t
\end{equation}
per unit of time. Both players seek to maximize their expected discounted future payoff.
More precisely, for a given triple $(a,\tau,\Pi_0)\in \mathcal P\times\mathcal T\times \mathcal P$ with $a=(a_0,a_1)$, $\tau=(\tau_0,\tau_1)$
and $\Pi_0=(\Pi_0^0,\Pi_0^1)$, and for a given discount rate $r>0$, define 
\[J_1^0(a,\tau)=(1-a_0)\E\left[\int_0^{\tau_0} e^{-rt}c_0\,dt\Big\vert \mu=\mu_0\right] + a_0\E\left[\int_0^{\tau_1} e^{-rt}c_1\,dt\Big\vert \mu=\mu_0\right]\]
and
\[J_1^1(a,\tau)=(1-a_1)\E\left[\int_0^{\tau_0} e^{-rt}c_0\,dt\Big\vert \mu=\mu_1\right] + a_1\E\left[\int_0^{\tau_1} e^{-rt}c_1\,dt\Big\vert \mu=\mu_1\right].\]
Then $J_1^i$ represents the expected payoff for Player~1 given the capacity $\mu=\mu_i$, $i=0,1$.
Similarly,  define
\[J_2^0(\tau,\Pi_0)=\E_{\Pi_0^0}\left[\int_0^{\tau_0} e^{-rt}(\mu-c_0)\,dt\right] \]
and 
\[J_2^1(\tau,\Pi_0)=\E_{\Pi_0^1}\left[\int_0^{\tau_1} e^{-rt}(\mu-c_1)\,dt\right] ,\]
so that $J_2^i$ is the expected payoff for Player~2 given that $C=c_i$, $i=0,1$. Here the sub-index in 
the expected value indicates that the expected value is calculated using the belief system $\Pi_0$ as initial probability
of the type $\mu=\mu_1$.

\begin{remark}
Note that Player~1 is equipped with randomised strategies, whereas Player~2 is not. 
The intuitive reason for this is as follows.
Player~1 acts at time 0 (revealing the realization of $C$), and once this is done, the game collapses to a single-player optimization problem
of choosing a non-anticipative stopping strategy 
for Player~2. In such a Markovian optimal stopping problem (cf. Subsection~~\ref{easyemployer}), however,
the optimal value is attained for a hitting time 
(a pure strategy), and there is typically no other
strategies (e.g. mixed strategies) that are optimal.
\end{remark}

\begin{remark}
While a key ingredient in our set-up is asymmetric information about the capacity $\mu$, we point out that the set-up itself, including the numerical values of all parameters $p$, $\mu_0$, $\mu_1$, $\sigma$, $r$, $c_0$ and $c_1$, is common knowledge to both players.
\end{remark}

We now introduce the solution concept that we use below.

\begin{definition} 
{\bf (Perfect Bayesian equilibrium.)}
We call a triplet $(a^*,\tau^*,\Pi_0)\in\mathcal P\times\mathcal T \times \mathcal P$ a perfect Bayesian equilibrium (PBE) if the following conditions are satisfied. 
\begin{itemize}
\item[\normalfont{(A)
Rationality:}]
\[J^i_1(a^*,\tau^*)\geq J^i_1(a,\tau^*)\]
for $i=0,1$, and
\[J^i_2(\tau^*,\Pi_0)\geq J^i_2(\tau,\Pi_0),\]
$i=0,1$,
for all pairs $(a,\tau)\in\mathcal P\times\mathcal T$.
\item[\normalfont{(B) Bayesian updating:}]
If $\min \{a^*_0,a^*_1\}<1$, then 
\[\Pi_0^0=\frac{p(1-a^*_1)}{p(1-a^*_1) + (1-p)(1-a^*_0)},\]
and if $\max\{a^*_0,a^*_1\}>0$, then 
\[\Pi_0^1=\frac{pa^*_1}{pa^*_1+ (1-p)a^*_0}.\]
\end{itemize}
\end{definition}

\begin{remark}
In the definition above, the first condition (A) requires the strategy triplet to form an equilibrium in the usual sense that neither of the players wants to unilaterally deviate from it.
The second condition (B) requires Player~2 to update their belief system using Bayes' rule on events with positive probability. If $a_0^*=a_1^*=1$ then Player 1 always chooses $C=c_1$, and if $a_0^*=a_1^*=0$, then Player 1 always chooses $C=c_0$, and in these cases the belief system can be chosen with no restriction.
\end{remark}

\begin{remark}
If $a_0^*=a_1^*\in\{0,1\}$, then the PBE is of {\em pooling type};
if $a_0^*=1-a_1^*\in\{0,1\}$, then the PBE is {\em separating}; finally, if $a^*_i\in\{0,1\}$ and $a_{1-i}^*\notin\{0,1\}$, then the PBE is {\em semi-separating}.
\end{remark}

\section{Filtering}
\label{sec3}

From the perspective of Player~2, the problem is a two-source learning problem: at time $t=0$, 
the salary claim $C$ is observed and the prior distribution of $\mu$ is updated in accordance with the 
specified belief system; at subsequent times $t>0$, the posterior distribution is updated 
using observations of $X$.

Given $\pi\in[0,1]$, define the process $\tilde\Pi:=\tilde\Pi^{\pi}$ by
\[\tilde \Pi_t:=\P_\pi(\mu=\mu_1\vert \F_t^{X}),\]
where the index $\pi$ indicates that the conditional probability is calculated using an initial estimate $\pi$ for
the event $\{\mu=\mu_1\}$. Thus $\tilde\Pi$ is the probability that $\mu=\mu_1$
conditioned merely on observations of $X$, and calculated with an initial belief $\pi$. 
It is well-known from filtering theory (see \cite{LS})
that the conditional probability $\tilde\Pi$ satisfies
\[d\tilde\Pi_t=\omega\tilde\Pi_t(1-\tilde\Pi_t)\,d\hat W_t,\]
where $\omega:=(\mu_1-\mu_0)/\sigma$ is the {\em signal-to-noise ratio}
and the {\em innovations process}
\[\hat W_t:=\frac{1}{\sigma}\left(X_t-\int_0^t (\mu_0+(\mu_1-\mu_0)\tilde\Pi_s)\,ds\right)\]
is an $\F^X-$Brownian motion. 
In particular, the process $\tilde \Pi$ is strong Markov.

Now, given a belief system $\Pi_0=(\Pi_0^0,\Pi_0^1)\in\mathcal P$, 
we define the conditional probability process
\begin{equation}
    \label{Pi}
\Pi_t:=\left\{\begin{array}{ll}
\tilde\Pi_t^{\Pi_0^0} & \mbox{on }\{C=c_0\}\\
\tilde\Pi_t^{\Pi_0^1} & \mbox{on }\{C=c_1\}.\end{array}\right.
\end{equation}
If the Bayesian updating property (B) holds, then $\Pi_t$ coincides with 
\[\P(\mu=\mu_1\vert \F^{X,C}_t),\]
on the event $\{C=c_i\}$ provided that $\P(C=c_i)>0$.

\begin{lemma}\label{lem}
Let $(\tau,\Pi_0)\in\mathcal T\times\mathcal P$. Then, for $i=0,1$, we have 
\[J_2^i(\tau,\Pi_0)=\E_{\Pi_0^i}\left[\int_0^{\tau_i}e^{-rt}(\mu_0-c_i+(\mu_1-\mu_0)\Pi_t) dt\right].\]
\end{lemma}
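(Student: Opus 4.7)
The plan is to start from the definition
\[J_2^i(\tau,\Pi_0)=\E_{\Pi_0^i}\left[\int_0^{\tau_i} e^{-rt}(\mu-c_i)\,dt\right]\]
and simply replace $\mu$ by the conditional expectation $\E_{\Pi_0^i}[\mu\mid \F_t^X]$ inside the time integral. The mechanism is a standard Fubini–tower–Fubini sandwich. First I would rewrite the integrand using $\mathbb I_{\{t<\tau_i\}}$, so that the upper limit $\tau_i$ becomes an indicator; the integrability needed to apply Fubini--Tonelli follows from the crude bound $|\mu-c_i|\le |\mu_0|\vee|\mu_1|+c_i$ and the finiteness of $\int_0^\infty e^{-rt}dt$. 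This yields
\[J_2^i(\tau,\Pi_0) = \int_0^\infty e^{-rt}\,\E_{\Pi_0^i}\!\left[(\mu-c_i)\mathbb I_{\{t<\tau_i\}}\right]dt.\]

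Next I would condition on $\F_t^X$. Because $\tau_i\in\mathcal T^X$, the event $\{t<\tau_i\}$ is $\F_t^X$-measurable, so by the tower property
\[\E_{\Pi_0^i}\!\left[(\mu-c_i)\mathbb I_{\{t<\tau_i\}}\right]=\E_{\Pi_0^i}\!\left[\E_{\Pi_0^i}[\mu-c_i\mid \F_t^X]\,\mathbb I_{\{t<\tau_i\}}\right].\]
Since $\mu$ only takes the values $\mu_0$ and $\mu_1$, and by the very definition of $\tilde\Pi^{\Pi_0^i}$ recalled in Section~\ref{sec3},
\[\E_{\Pi_0^i}[\mu\mid \F_t^X] = \mu_0(1-\tilde\Pi_t^{\Pi_0^i})+\mu_1\tilde\Pi_t^{\Pi_0^i}=\mu_0+(\mu_1-\mu_0)\tilde\Pi_t^{\Pi_0^i}.\]

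The last step is to identify $\tilde\Pi_t^{\Pi_0^i}$ with $\Pi_t$ in the expression on the right-hand side. Under $\P_{\Pi_0^i}$ no randomness associated with the signal $C$ is present: the notation $\E_{\Pi_0^i}$ simply indicates that the prior weight placed on $\{\mu=\mu_1\}$ is $\Pi_0^i$. On the event $\{C=c_i\}$, formula \eqref{Pi} gives $\Pi_t=\tilde\Pi_t^{\Pi_0^i}$, and this justifies writing $\Pi_t$ in place of $\tilde\Pi_t^{\Pi_0^i}$ inside $\E_{\Pi_0^i}$. Substituting and reversing the Fubini step delivers the claimed identity.

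The proof is essentially bookkeeping, with no hard analytic step; the only delicate point is the final notational identification of $\Pi_t$ with $\tilde\Pi_t^{\Pi_0^i}$, which is a direct consequence of the definition \eqref{Pi} once one remembers that $\E_{\Pi_0^i}$ refers to a measure in which the prior on $\mu$ has been fixed through $\Pi_0^i$ rather than drawn from an observed value of $C$.
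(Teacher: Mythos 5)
Your proof is correct, but it takes a genuinely different route from the paper's. You swap expectation and time integral via Fubini--Tonelli (justified by the bounded integrand and $\int_0^\infty e^{-rt}\,dt<\infty$), then apply the tower property at each \emph{fixed} time $t$, using that $\{t<\tau_i\}\in\F_t^X$ and that $\E_{\Pi_0^i}[\mu\mid\F_t^X]=\mu_0+(\mu_1-\mu_0)\tilde\Pi_t^{\Pi_0^i}$, and finally undo the Fubini step. The paper instead first integrates the (time-constant) $\mu$ explicitly to get $\mu\,\frac{1-e^{-r\tau_i}}{r}$, conditions at the \emph{random} time $\tau_i$ to replace $\mu$ by $\mu_0+(\mu_1-\mu_0)\Pi_{\tau_i}$, and then uses It\^o's formula together with optional sampling on the martingale $\mu_0+(\mu_1-\mu_0)\Pi_t$ to convert the terminal expression back into a running integral. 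Your argument is the more elementary of the two: it needs no stochastic calculus and no optional sampling, only the measurability of $\{t<\tau_i\}$ with respect to $\F_t^X$ (which holds because $\tau_i\in\mathcal T^X$) and the basic filtering identity. The paper's route has the minor advantage of rehearsing exactly the It\^o--optional-sampling machinery that reappears in the verification of Theorem~\ref{main}. Your closing remark on identifying $\Pi_t$ with $\tilde\Pi_t^{\Pi_0^i}$ under $\E_{\Pi_0^i}$ matches the convention the paper adopts implicitly in its own proof, so nothing is missing.
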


\begin{proof}
By conditioning, 
\[\E_{\Pi_0^i}\left[\int_0^{\tau_i} e^{-rt}\mu\,dt\right] =\E_{\Pi_0^i}\left[\mu\frac{1-e^{-r\tau_i}}{r}\right]
=\E_{\Pi_0^i}\left[(\mu_0 +(\mu_1-\mu_0)\Pi_{\tau_i})\frac{1-e^{-r\tau_i}}{r}\right],\]
where $\Pi_t:=\P_{\Pi_0^i}\left(\mu=\mu_1\vert \F^X_t\right)$.
Moreover, by an application of Ito's formula and optional sampling, 
\[\E_{\Pi_0^i}\left[(\mu_0 +(\mu_1-\mu_0)\Pi_{\tau_i})\frac{1-e^{-r\tau_i}}{r}\right]
=\E_{\Pi_0^i}\left[\int_0^{\tau_i} e^{-rt} (\mu_0 +(\mu_1-\mu_0)\Pi_t)\,dt\right].\]
Consequently, 
\begin{eqnarray*}
J_2^i(\tau,\Pi_0) &=& \E_{\Pi_0^i}\left[\int_0^{\tau_i} e^{-rt}(\mu-c_i)\,dt\right] \\
&=& \E_{\Pi_0^i}\left[\int_0^{\tau_i} e^{-rt} (\mu_0 -c_i+(\mu_1-\mu_0)\Pi_t)\,dt\right].
\end{eqnarray*}
\end{proof}

\section{A semi-separating PBE}
\label{sec4}

Note that if $c_0\geq \mu_1$, then the net drift $\mu-C$ in \eqref{net} is non-positive, and 
Player~2 should choose immediate firing ($\tau=0$).
Similarly, if $\mu_0\geq c_1$, then $\tau=\infty$ would always be optimal. 
Thus, to rule out degenerate cases, a minimal assumption is that $\mu_0< c_1<\mu_1$. 
Moreover, we will make the additional assumption that $c_0\leq\mu_0$ so that the net drift
$\mu-C$  in \eqref{net} is non-negative on the event $\{C=c_0\}$.
We thus impose the parameter ordering
\begin{equation}
\label{ass}
0<c_0\leq \mu_0<c_1<\mu_1.
\end{equation}

It is straightforward to see that there is no PBE of separating type. Indeed, in a separating equilibrium with $a^*=(0,1)$, the strong type would never be fired, and therefore the weak type would have an incentive to deviate and choose $c_1$; similarly, if $a^*=(1,0)$, then the weak type would be fired immediately, and thus again have an incentive to deviate.
The aim of the current section is to derive a perfect Bayesian equilibrium of semi-separating type under the assumption \eqref{ass}.
In Subsections~\ref{easyemployer}-\ref{easyemployee} we use intuitive arguments to derive a candidate equilibrium,
which is then verified in Subsection~\ref{easyverification}.

\subsection{The employer's perspective} \label{easyemployer}

Under the assumption \eqref{ass}, the lower salary level $c_0$ is smaller than the capacity $\mu$ with probability one.
Thus, if \eqref{ass} holds, then it is clear that if Player~1 chooses $C=c_0$, then an optimal 
response for Player~2 should be to choose $\tau_0=\infty$.

On the other hand, on the event $\{C=c_1\}$, Player~2 would stop if there is sufficient evidence that $\mu=\mu_0$. More precisely, we expect a boundary level $b$ such that
\begin{equation}
\label{tau1}
\tau_1:=\inf\{t\geq 0:\Pi_t\leq b\}
\end{equation}
is an optimal response for Player~2. To determine $b$, standard optimal stopping theory
based on the dynamic programming principle (see, e.g., \cite{Sh}) suggests that the pair 
$(V,b)$, where 
\begin{eqnarray*}
V(\pi) &:=&
\sup_{\tau}\E\left[\int_0^\tau e^{-rt}(\mu_0-c_1 +(\mu_1-\mu_0)\tilde\Pi^\pi_t)\,dt\right],
\end{eqnarray*}
solves the free-boundary problem
\begin{equation}
\label{fbp}
\left\{\begin{array}{rl}
\mathcal L V +\mu_0-c_1+(\mu_1-\mu_0)\pi=0 & \pi\in(b,1)\\
V(b)=0\\
V_\pi(b)=0\\
V(1-)= (\mu_1-c_1)/r,\end{array}\right.
\end{equation}
where 
\[\mathcal L =\frac{1}{2}\omega^2\pi^2(1-\pi)^2\frac{d^2}{d\pi^2}-r.\]
Here the two boundary conditions at $b$ constitute the so-called condition of smooth fit, and the boundary condition at $\pi=1$ corresponds to receiving (discounted) payments at rate $\mu_1-c_1$ until time $\tau=\infty$. Note that the ODE in \eqref{fbp}
is of second order, so there are two degrees of freedom; additionally, the boundary $b$ is unknown. On the other hand, there are three boundary conditions, so one would expect that \eqref{fbp} is well-posed.

To solve the free-boundary problem \eqref{fbp}, one readily verifies that the general solution of the ODE is given by 
\[V(\pi)=A_1(1-\pi)\left(\frac{\pi}{1-\pi}\right)^{\gamma_1} + A_2(1-\pi)\left(\frac{\pi}{1-\pi}\right)^{\gamma_2} + \frac{\mu_0-c_1+(\mu_1-\mu_0)\pi}{r},\]
where $\gamma_1<0$ and $\gamma_2>1$ are the solutions of the quadratic equation
\begin{equation}
\label{quadratic}
\gamma^2 -\gamma -\frac{2r}{\omega^2} =0,
\end{equation}
and $A_1$ and $A_2$ are arbitrary constants.
Imposing the boundary condition at $\pi=1$, we must have $A_2=0$, 
and so the two remaining boundary conditions yield
\[\left\{\begin{array}{ll}
A_1(1-b)\left(\frac{b}{1-b}\right)^{\gamma_1} +\frac{\mu_0-c_1+(\mu_1-\mu_0)b}{r}=0\\
A_1(\gamma_1-b)\left(\frac{b}{1-b}\right)^{\gamma_1}+\frac{(\mu_1-\mu_0)b}{r}=0.
\end{array}\right.\]
Eliminating $A_1$, we find that 
\begin{equation}
\label{b}
b=\frac{-(c_1-\mu_0)\gamma_1}{\mu_1-c_1-(\mu_1-\mu_0)\gamma_1}.
\end{equation}

Thus the candidate optimal response for Player~2 when Player~1 chooses the higher salary $C=c_1$ is to stop when the conditional probability process $\Pi$ falls below the constant boundary $b$ in \eqref{b}.
Moreover, the candidate value for the employer is then
\[
    V(\pi)=\left\{\begin{array}{cl}
\frac{c_1-\mu_0-(\mu_1-\mu_0)b}{r}\left(\frac{\pi(1-b)}{b(1-\pi)}\right)^{\gamma_1}\frac{1-\pi}{1-b} + \frac{\mu_0-c_1+(\mu_1-\mu_0)\pi}{r} & \pi>b\\
0 & \pi\leq b.\end{array}\right.\]
For a graphical illustration of the function $V$ and the threshold $b$, see Figure~\ref{fig:V_plot}.

\begin{figure}[h]
    \centering
    \captionsetup{width=.8\textwidth}
    \includegraphics[width=0.9\textwidth]{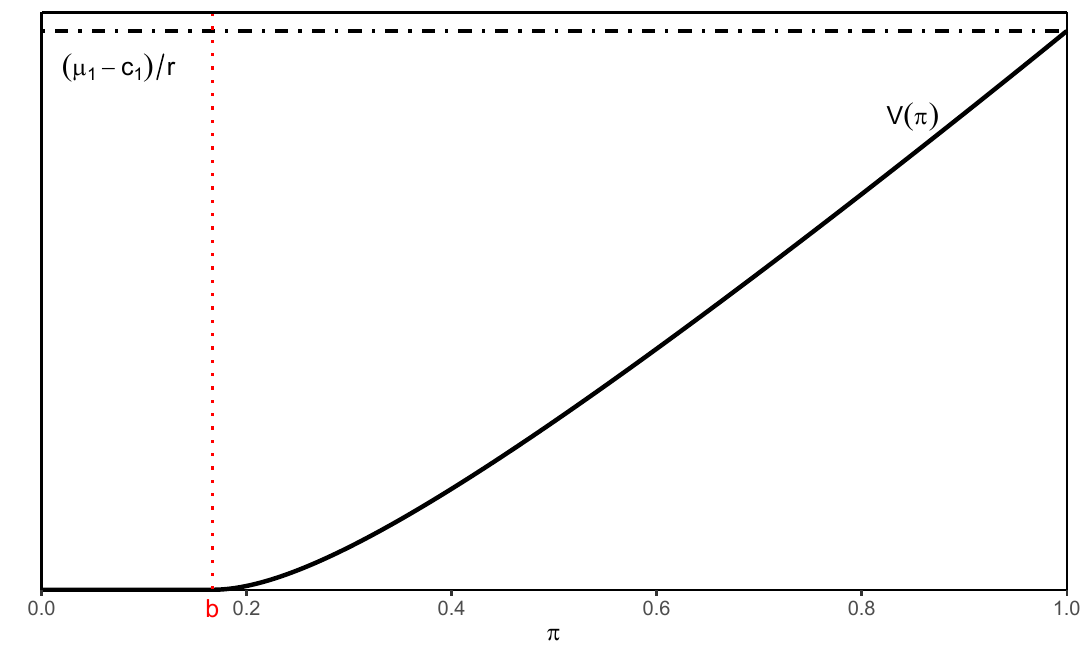}
    \caption{The value function $V(\pi)$ of the employer in the case when $C=c_1$ is chosen. The parameter values chosen for this example figure are $c_1=1.5$, $\mu_0=1.4$, $\mu_1=1.7$, $r=0.05$ and $\sigma=1$.
The value function attains positive values only after the boundary level $b \approx 0.167$, and it approaches its maximum value $(\mu_1-c_1)/r$ for $\pi$ close to 1.}
    \label{fig:V_plot}
\end{figure}

\subsection{The employee's perspective}\label{easyemployee}

We now take the perspective of Player 1. We will construct an equilibrium in which Player~1 
always chooses $C=c_1$ on the event $\{\mu=\mu_1\}$, and on the event $\{\mu=\mu_0\}$
uses a strategy such that $\P(C=c_1\vert \mu=\mu_0)=a_0=1-\P(C=c_0\vert \mu=\mu_0)$ for some $a_0\in[0,1]$ to be determined. Thus, in the notation of Section~\ref{sec2}, we consider the strategy $a=(a_0,1)\in\mathcal P$.

As noted above, on the event $\{C=c_0\}$, Player~2 would use $\tau_0=\infty$.
By the {\em indifference principle} in game theory (see, e.g., \cite{F} or \cite{L}), to have an equilibrium with a strategy pair
$(a^*,\tau^*)$ in which Player~1 uses a mixed strategy $a^*=(a_0,1)$
with $a_0\in(0,1)$ and Player~2 uses $\tau^*=(\infty,\tau_1)$ with $\tau_1$ as in \eqref{tau1}, we need that the expected payoffs 
$J^0_1((0,1),\tau^*)$ and $J^0_1((1,1),\tau^*)$ coincide.
Clearly, choosing $C=c_0$ gives the expected payoff 
\[J^0_1((0,1),(\infty,\tau_1))=c_0/r\]
for Player~1.

To determine the expected payoff $J^0_1((1,1),\tau^*)$ for Player~1 when $C=c_1$ is chosen,
note that on the event $\{C=c_1\}$, Player~2 would first re-evaluate the 
probability that Player~1 has the larger capacity $\mu=\mu_1$ according to the specified belief system $\Pi_0$. Moreover, by the Bayesian updating requirement of the belief system, we have
\[\Pi_0^1=\P(\mu=\mu_1\vert C=c_1)=\frac{\P(\mu=\mu_1, C=c_1)}{\P( C=c_1)}=
\frac{p}{p+(1-p)a_0}.\]
Thus $\Pi_t$ makes an initial jump from $\Pi_{0-}=p$ up to $\Pi^1_0=\frac{p}{p+(1-p)a_0}\geq p$, 
and then it diffuses with dynamics 
\[d\Pi_t=\omega\Pi_t(1-\Pi_t)\,d\hat W_t.\]
From the perspective of Player~1, however, $\hat W$ is not a Brownian motion since Player~1 knows the true value of $\mu$. Instead, 
\begin{eqnarray*}
d\Pi_t=\omega\Pi_t(1-\Pi_t)\,d\hat W_t = -\omega^2\Pi_t^2(1-\Pi_t)\,dt + \omega\Pi_t(1-\Pi_t)\,dW_t.
\end{eqnarray*}
Consequently, if Player~1 chooses $C=c_1$, then their value is $U(\Pi^1_0)$, where
\begin{equation}\label{U}
U(\pi) :=\E_\pi\left[\int_0^{\tau_1}e^{-rt}c_1\,dt\Big\vert \mu=\mu_0\right]
\end{equation}
solves
\begin{equation}
\label{bp}
\left\{\begin{array}{rl}
\frac{\omega^2\pi^2(1-\pi)^2}{2}U_{\pi\pi}-\omega^2\pi^2(1-\pi)U_{\pi}-rU +c_1=0 & \pi\in(b,1)\\
U(b)=0\\
U(1-)= c_1/r.\end{array}\right.
\end{equation}
The ODE in \eqref{bp} has general solution 
\[U(\pi)=B_1\left(\frac{\pi}{1-\pi}\right)^{\gamma_1}  + B_2\left(\frac{\pi}{1-\pi}\right)^{\gamma_2}
+c_1/r,\]
where $\gamma_1<0$ and $\gamma_2>1$ are the solutions of \eqref{quadratic} as before,
and $B_1$ and $B_2$ are arbitrary constants. Similarly as above, the boundary condition at $\pi=1$ yields $B_2=0$, and then the boundary condition at $\pi=b$ gives
\[B_1=\frac{-c_1}{r}\left(\frac{b}{1-b}\right)^{-\gamma_1},\]
so 
\[U(\pi)=\left\{\begin{array}{cl}
\frac{c_1}{r}\left(1-\left(\frac{\pi(1-b)}{(1-\pi)b}\right)^{\gamma_1}\right) & \pi>b\\
0 &\pi\leq b.\end{array}\right.\]

Now recall that by the indifference principle we are looking for $a_0\in(0,1)$ so that 
\[\frac{c_0}{r}= U\left(\Pi_0^1\right)
=U\left(\frac{p}{p+(1-p)a_0}\right).\]
This is possible only if $U(p)<c_0/r$, i.e.\ if
\[ \frac{p}{1-p}< \frac{b}{1-b}\left(1-\frac{c_0}{c_1}\right)^{1/\gamma_1}.\]
Equivalently, we need to have 
\begin{equation}
\label{phat}
p<\hat p:=\frac{b(1-\frac{c_0}{c_1})^{1/\gamma_1}}{1-b+b(1-\frac{c_0}{c_1})^{1/\gamma_1}},
\end{equation}
where $\hat p$ is the indifference point so that $U(\hat p)=c_0/r$.
Moreover, in that case, $a_0$ should be chosen so that
\[\frac{p}{p+(1-p)a_0}=\frac{b(1-\frac{c_0}{c_1})^{1/\gamma_1}}{1-b+b(1-\frac{c_0}{c_1})^{1/\gamma_1}},\]
i.e.
\[a_0=\frac{p(1-b)}{(1-p)b(1-\frac{c_0}{c_1})^{1/\gamma_1}}.\]

That is, the candidate optimal strategy for Player~1 is as follows. If $\mu=\mu_1$, then the high salary is chosen with probability one ($a_1=1$). On the other hand, if Player~1 is of the weak type ($\mu=\mu_0$), then 
the high salary $c_1$ is chosen with probability $a_0$, where 
\[a_0=\left\{\begin{array}{cl}
\frac{p(1-b)}{(1-p)b(1-\frac{c_0}{c_1})^{1/\gamma_1}} & p<\hat p\\
1 & p\geq \hat p.\end{array}\right.\]
For a graphical illustration of the value function $U$ and the indifference point $\hat p$, see Figure~\ref{fig:U_plot}.
\begin{figure}[h]
    \centering
    \captionsetup{width=.8\textwidth}
    \includegraphics[width=0.9\textwidth]{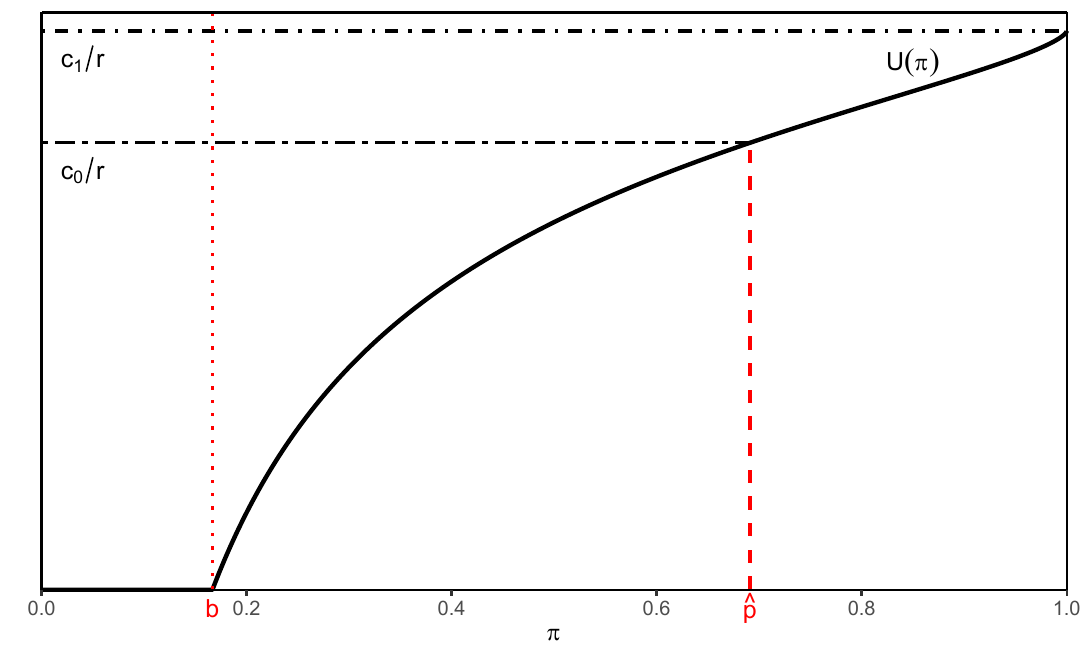}
    \caption{The value function $U(\pi)$ for the weak type employee when the high salary $C = c_1$ is chosen. The parameter values of $c_1$, $\mu_0$, $\mu_1$, $r$ and $\sigma$ are the same as in Figure~\ref{fig:V_plot}, and $c_0=1.2$. Here $\hat p$ is the unique value
so that $U(\hat p)=c_0/r$.}
    \label{fig:U_plot}
\end{figure}

\subsection{Verification of equilibrium}\label{easyverification}
We now summarize the strategies described above; in Theorem~\ref{main} we then verify that these strategies together constitute a perfect Bayesian equilibrium.

Let 
\[b=\frac{-(c_1-\mu_0)\gamma_1}{\mu_1-c_1-(\mu_1-\mu_0)\gamma_1}\]
as in \eqref{b} above, and define the strategy $a^*=(a_0^*,a^*_1)$ of Player~1 by 
\[a^*_0=\left\{\begin{array}{cl}
\frac{p(1-b)}{(1-p)b(1-\frac{c_0}{c_1})^{1/\gamma_1}} & p<\hat p\\
1 & p\geq\hat p\end{array}\right.\]
and $a_1^*=1$, where $\hat p$ is as in \eqref{phat}.
Moreover, let $\Pi_0:=(\Pi_0^0,\Pi_0^1)=(0,\hat p\vee p)$,
and let $\tau^*=(\tau^*_0,\tau^*_1)$ be defined by
\[\tau^*_0:=\infty\]
and
\[\tau^*_1:=\inf\{t\geq 0:\tilde\Pi_t^{\Pi_0^1}\leq b\}.\]

\begin{theorem}\label{main}
Assume that \eqref{ass} holds.
Then the triplet $(a^*,\tau^*, \Pi_0)$ specified above is a perfect Bayesian equilibrium. Moreover, if $p<\hat p$, the equilibrium is 
semi-separating; if $p\geq \hat p$, then the equilibrium is of pooling type.
\end{theorem}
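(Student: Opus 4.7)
The plan is to verify conditions (A) and (B) of Definition 1 for $(a^*, \tau^*, \Pi_0)$ in turn, and then read the equilibrium type directly from the values of $a^*$. Condition (B) is direct algebra: since $a_1^*=1$, the Bayes formula for $\Pi_0^0$ (applicable when $a_0^*<1$) gives $\Pi_0^0 = 0$, matching the specification, while the formula for $\Pi_0^1$ reduces to $\hat p$ in the semi-separating regime by the very definition of $a_0^*$, and to $p$ when $a_0^*=1$.

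For Player 2's rationality on $\{C = c_0\}$, the belief $\Pi_0^0 = 0$ makes $\tilde\Pi_t^0 \equiv 0$ by inspection of its SDE; Lemma~\ref{lem} together with \eqref{ass} then expresses the integrand as the deterministic non-negative quantity $e^{-rt}(\mu_0-c_0)$, so $\tau_0^*=\infty$ is optimal. On $\{C = c_1\}$, a standard verification argument applies: an It\^o expansion of $e^{-rt}V(\Pi_t)$, combined with the ODE on $(b,1)$, the smooth fit at $b$ (giving $C^1$-regularity), and the inequality $\mathcal{L} V + \mu_0 - c_1 + (\mu_1-\mu_0)\pi \leq 0$ on $[0,b]$---which on that interval reduces to $\mu_0-c_1+(\mu_1-\mu_0)b \leq 0$ and follows from the closed form of $b$ and the strict inequality $c_1<\mu_1$---yields $V(\pi) \geq J_2^1(\tau,\Pi_0)$ for all $\tau$, with equality attained at $\tau_1^*$.

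Player 1's rationality splits by type. For the weak type, $J_1^0((0,1),\tau^*)=c_0/r$ and $J_1^0((1,1),\tau^*)=U(\Pi_0^1)$: in the semi-separating case $\Pi_0^1=\hat p$ yields indifference by construction of $a_0^*$, so any $a_0\in[0,1]$ is a best response; in the pooling case $\Pi_0^1=p\geq\hat p$ combined with the monotonicity of $U$ on $(b,1)$ gives $U(p)\geq c_0/r$, so $a_0^*=1$ is optimal. For the strong type, set $U^1(\pi):=\E_\pi[\int_0^{\tau_1^*} e^{-rt}c_1\,dt\mid \mu=\mu_1]$; the condition $a_1^*=1$ is then optimal provided $U^1(\Pi_0^1)\geq c_0/r$, and I expect this to be the main obstacle. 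My preferred route is a pathwise SDE comparison: under $\mu=\mu_1$ one has $d\Pi_t=\omega^2\Pi_t(1-\Pi_t)^2\,dt+\omega\Pi_t(1-\Pi_t)\,dW_t$, whereas under $\mu=\mu_0$ one has $d\Pi_t=-\omega^2\Pi_t^2(1-\Pi_t)\,dt+\omega\Pi_t(1-\Pi_t)\,dW_t$. The two SDEs share the same diffusion coefficient and the first has strictly larger drift, so the classical comparison theorem, driving both with a common $W$, produces pathwise domination of the $\mu_1$-solution over the $\mu_0$-solution. The first hitting time of $b$ is therefore (weakly) larger under $\mu_1$, yielding $U^1\geq U$ on $(b,1)$ and in particular $U^1(\Pi_0^1)\geq U(\Pi_0^1)\geq c_0/r$ by the weak-type analysis.

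Finally, the classification is immediate from the definitions: if $p<\hat p$ then $a_0^*\in(0,1)$ while $a_1^*=1$, which is the semi-separating case; if $p\geq\hat p$ then $a_0^*=a_1^*=1$, which is pooling.
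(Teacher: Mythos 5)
Your proposal is correct and follows essentially the same architecture as the paper's proof: Bayesian updating by construction, the martingale/supermartingale verification for $\tau_1^*$ via It\^o and optional sampling, the indifference-principle argument for the weak type, and the reduction of the strong type's rationality to the inequality $\E_{\Pi_0^1}[\int_0^{\tau_1^*}e^{-rt}c_1\,dt\mid\mu=\mu_1]\geq U(\Pi_0^1)\geq c_0/r$. The one substantive difference is that the paper simply asserts the first of these inequalities, whereas you prove it: you write down the $\Pi$-dynamics under each conditional measure, observe that the diffusion coefficients agree while the drift is larger under $\mu=\mu_1$ ($+\omega^2\pi(1-\pi)^2$ versus $-\omega^2\pi^2(1-\pi)$), and invoke the pathwise comparison theorem with a common driving $W$ (legitimate since $W$ is independent of $\mu$) to conclude that the hitting time of $b$ is a.s.\ larger under $\mu=\mu_1$. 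This is a genuine and worthwhile addition that closes the only gap in the published argument; your explicit check that $\mu_0-c_1+(\mu_1-\mu_0)b\leq 0$ on $[0,b]$ (needed for the supermartingale property of $Y$ below the boundary, and following from the closed form of $b$ and $c_1<\mu_1$) is likewise a detail the paper leaves implicit.
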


\begin{proof}
We first note that, by construction, the belief system $\Pi_0$ satisfies the Bayesian updating property.
The proof of rationality is divided into two parts.

\vspace{4mm}\noindent
{\bf Optimality of $\tau^*$.} 
First note that $\Pi_0^0=0$ yields that 
\[J_2^0(\tau,\Pi_0)=\E\left[\int_0^{\tau_0} e^{-rt}(\mu_0-c_0)\,dt\Big \vert \mu=\mu_0\right]\leq \frac{\mu_0-c_0}{r} = J_2^0(\tau^*,\Pi_0)\]
for any $\tau\in\mathcal T$, so $\tau_0^*=\infty$ is a rational response to $C=c_0$.

Next, if the employer observes the event $\{C=c_1\}$, then the stopping time 
\[\tau^*_1:=\inf\{t\geq 0 :Z_t\leq b\}\] 
is used, where
\[ Z_t:=\tilde{\Pi}_t^{\Pi^1_0}=\P_{\Pi_0^1}(\mu=\mu_1\vert \F_t^{X})\]
(cf. \eqref{Pi}).
By Section~\ref{sec3}, 
\[dZ_t=\omega Z_t(1-Z_t)\,d\hat W_t,\]
so an application of Ito's formula together with \eqref{fbp} shows that 
\[Y_t:=e^{-rt}V(Z_t)+ \int_0^t e^{-rs}(\mu_0-c_1+(\mu_1-\mu_0)Z_s)ds\]
is a bounded supermartingale. For any stopping time $\tau'=(\tau_0',\tau_1')\in\mathcal T$, optional sampling therefore gives that 
\begin{eqnarray*}
V(\Pi_0^1) &\geq& \E\left[e^{-r(T\wedge\tau'_1)}V(Z_{T\wedge\tau'_1})+ \int_0^{T\wedge\tau'_1} e^{-rt}(\mu_0-c_1+(\mu_1-\mu_0)Z_t)dt\right]\\
&\geq& \E\left[ \int_0^{T\wedge\tau'_1} e^{-rt}(\mu_0-c_1+(\mu_1-\mu_0)Z_t)dt\right]\\
&\to& \E\left[ \int_0^{\tau'_1} e^{-rt}(\mu_0-c_1+(\mu_1-\mu_0)Z_t)dt\right]
\end{eqnarray*}
as $T\to\infty$ by bounded convergence. Since  
\[\E\left[ \int_0^{\tau'_1} e^{-rt}(\mu_0-c_1+(\mu_1-\mu_0)Z_t)dt\right]=J_2^1(\tau',\Pi_0)\]
by Lemma~\ref{lem}, we find that 
\begin{equation}
\label{ineq}
J_2^1(\tau',\Pi_0)\leq V(\Pi_0^1)
\end{equation}
for all $\tau'\in\mathcal T$.

Furthermore, for $\tau^*$, the stopped process $Y_{t\wedge\tau^*_1}$ is a martingale, so optional sampling and bounded convergence give
\begin{eqnarray*}
V(\Pi_0^1) &=& \E\left[e^{-r(T\wedge\tau^*_1)}V(Z_{T\wedge\tau^*_1})+ \int_0^{T\wedge\tau^*_1} e^{-rt}(\mu_0-c_1+(\mu_1-\mu_0)Z_t)dt\right]\\
&\to& \E\left[ \int_0^{\tau^*_1} e^{-rt}(\mu_0-c_1+(\mu_1-\mu_0)Z_t)dt\right] = J_2^1(\tau^*,\Pi_0)
\end{eqnarray*}
as $T\to\infty$, which together with \eqref{ineq} implies that $\tau^*_1$ is an optimal response to $C=c_1$.

\vspace{4mm}
\noindent
{\bf Optimality of $a^*$.}
We have that 
\begin{eqnarray*}
J^0_1(a,\tau^*) &=& (1-a_0)\frac{c_0}{r} + a_0\E_{\Pi_0^1}\left[\int_0^{\tau_1^*} e^{-rt}c_1\,dt\Big\vert\mu=\mu_0\right]\\
&=& (1-a_0)\frac{c_0}{r} + a_0U(\hat p\vee p)\\
&\leq& (1-a^*_0)\frac{c_0}{r} + a^*_0U(\hat p\vee p) = J^0_1(a^*,\tau^*),
\end{eqnarray*}
where the inequality holds since if $p>\hat p$ then we have $U(\hat p\vee p)=U(p)\geq c_0/r$ and $a_0^*=1\geq a_0$, and if 
$p\leq \hat p$ then we have $U(\hat p\vee p)=U(\hat p)=c_0/r$.

Similarly, 
\begin{eqnarray*}
J^1_1(a,\tau^*) &=& (1-a_1)\frac{c_0}{r} + a_1\E_{\Pi_0^1}\left[\int_0^{\tau_1^*} e^{-rt}c_1\,dt\Big\vert\mu=\mu_1\right]\\
&\leq& \E_{\Pi_0^1}\left[\int_0^{\tau_1^*} e^{-rt}c_1\,dt\Big\vert\mu=\mu_1\right] = J^1_1(a^*,\tau^*),
\end{eqnarray*}
where the inequality follows from the inequalities
\[\E_{\Pi_0^1}\left[\int_0^{\tau_1^*} e^{-rt}c_1\,dt\Big\vert\mu=\mu_1\right] \geq 
\E_{\Pi_0^1}\left[\int_0^{\tau_1^*} e^{-rt}c_1\,dt\Big\vert\mu=\mu_0\right] =U(\Pi_0^1)\geq c_0/r.\]
\end{proof}

\begin{remark}
\label{rem}
As is often the case for signaling games, there is no uniqueness of PBEs. Indeed, 
consider the strategy pair $(a,\tau)$, where $a=(0,0)$ and $\tau=(\infty,0)$; in words, Player~1 always chooses $C=c_0$ (regardless of their type) and Player~2 never stops if $C=c_0$ and stops immediately if $C=c_1$.
Then also $(a,\tau,\Pi_0)$ with $\Pi_0=(p,\Pi_0^1)$ is a perfect Bayesian equilibrium (of pooling type) provided the belief $\Pi_0^1$ is chosen small 
enough (e.g., $\Pi_0^1\leq b$). 

There have been substantial efforts in the literature to refine the notion of PBE (cf. \cite{BS} and \cite{KW}) in order to rule out some non-intuitive equilibria. Rather than taking that path, however, we simply note that in the pooling equilibrium 
both types have the same equilibrium value $c_0/r$, which is dominated by the corresponding equilibrium values in Theorem~\ref{main}, so the semi-separating PBE is preferred by the first-mover of our game.
\end{remark}

\begin{remark}
We have analyzed the game under the assumption \eqref{ass} that $c_0\leq\mu_0<c_1<\mu_1$. 
In the alternative ordering $\mu_0<c_0<c_1<\mu_1$,
the smaller salary $c_0$ provides a negative running reward for the employer in case of a weak type employee, and a semi-separating 
(or separating) equilibrium is not feasible. 
As in Remark~\ref{rem}, one can construct a pooling PBE with $a=(0,0)$, but one also obtains a 
pooling equilibrium with $a=(1,1)$, supported by a 
sufficiently small belief $\Pi_0^0$. While semi-separating and separating PBEs are not feasible, it remains an open question whether mixing between the two pooling equilibrium is possible in this parameter regime.
\end{remark}

\section{Extensions}\label{sec5}

In this section we briefly discuss a few extensions of the basic set-up presented above. All of these extensions can be easily solved using the methods of the current article, thus demonstrating the robustness of the benchmark case studied above. For the sake of brevity, we merely outline the solutions and leave out the full arguments.

\subsection{Firing cost}
In this section we consider the specification (i)-(iv) in the introduction, but with the addition that
\begin{itemize}
    \item [(v)]
    at the firing time $\tau$, Player~2 pays a firing cost $\epsilon$, where $\epsilon\in(0,\frac{c_1-\mu_0}{r})$.
\end{itemize}
Note that the assumption $\epsilon<\frac{c_1-\mu_0}{r}$ implies that the firing cost is smaller than the maximal possible loss for Player~2.
Adding assumption (v), the expected payoff for Player~2 is now
\begin{eqnarray*}
J_{2}^{\epsilon, i}(\tau,\Pi_0)&:=&\E_{\Pi_0^{i}}\left[\int_0^{\tau_i} e^{-rt}(\mu-c_i)\,dt-\epsilon e^{-r\tau_i}\mathbf{1}_{\{0<\tau_i<\infty\}}\right]\\
&=& \E_{\Pi_0^{i}} \left[\int_0^{\tau_i} e^{-rt}(\mu_0-c_i+ (\mu_1-\mu_0)\Pi_t)\,dt-\epsilon e^{-r\tau_i}\mathbf{1}_{\{0<\tau_i<\infty\}}\right]
\end{eqnarray*}
by Lemma~\ref{lem}. Note that the choice $\tau_i=0$ gives rise to no firing cost, with the interpretation that no hiring takes place.

Replacing the first boundary condition in the free-boundary problem \eqref{fbp} with $V(b)=-\epsilon$, one obtains a stopping boundary 
\[b:=b^\epsilon :=\frac{-(c_1-\mu_0-\epsilon r)\gamma_1}{\mu_1-c_1+r\epsilon-(\mu_1-\mu_0)\gamma_1},\]
where $\gamma_1$ is the negative solution to the quadratic equation \eqref{quadratic}. Due to the parameter ordering in \eqref{ass}, one verifies that indeed $b^\epsilon\in(0,1)$ when $\epsilon\in(0,\frac{c_1-\mu_0}{r})$.
Arguing as in Subsection~\ref{easyemployee},
we find that the indifference point $\hat p$ 
at which $U(\hat p)=c_0/r$ is given by
\[\hat p:=\hat{p}^\epsilon:=\frac{b^\epsilon(1-\frac{c_0}{c_1})^{1/\gamma_1}}{1-b^\epsilon+b^\epsilon(1-\frac{c_0}{c_1})^{1/\gamma_1}},\]
which leads to the candidate strategy $a^*=(a^*_0,1)$ with
\[a^*_0=\left\{\begin{array}{cl}
\frac{p(1-b^\epsilon)}{(1-p)b^\epsilon(1-\frac{c_0}{c_1})^{1/\gamma_1}} & p<\hat p\\
1 & p\geq\hat p.\end{array}\right.\]
Thus we specify a triplet $(a^{*},\tau^{*}, \Pi_0)\in\mathcal P\times\mathcal T\times\mathcal P$ by $a^{*}=(a_0^{*},1)$, where $\Pi_0=(0,\hat p\vee p)$ and $\tau^{*}=(\infty, \tau_1)$, with 
\[\tau^{*}_1=\inf\{t\geq 0:\tilde\Pi_t^{\Pi_0^1}\leq b^\epsilon\}.\] 
It is then straightforward to verify that if $V(\hat p\vee p)> 0$, then $(a^{*},\tau^{*}, \Pi_0)$ is a PBE. 
(On the other hand, if $V(\hat p\vee p)\leq 0$, then 
$((0,0),(0,0),(p,p))$, corresponding to always choosing $C=c_0$ and no hiring is an equilibrium.)

The addition of a firing cost $\epsilon\in(0,\frac{c_1-\mu_0}{r})$, together with $V(\hat p\vee p)> 0$, then constitutes a PBE with a lower stopping boundary $b^\epsilon$ for Player~2 and a higher randomizing probability $a_0^*$ for a low-type Player~1  compared to the corresponding values in the benchmark case of Section~\ref{sec4}.

\subsection{Uncertainty about one's type}
In this section we consider the same set-up as specified in (i)-(iv), but where (ii) is replaced by
\begin{itemize}
    \item [(ii')]
    At time $t=0$, Player 1 first receives a {\bf noisy observation} of their capacity $\mu$, and then presents to Player 2 a salary claim $C\in\{c_0,c_1\}$.
\end{itemize}
In this way, also Player~1 has incomplete information about their own capacity (cf. \cite{W}). 
More precisely, assume that 
the noisy signal is either of two events: 'strong belief' and 'weak belief', where
\[p_1:=\P(\mbox{strong belief}),\]
\[q:=\P(\mu=\mu_1\vert\mbox{strong belief})\]
and
\begin{equation}
\label{overest}
\P(\mu=\mu_1\vert\mbox{weak belief})=0.
\end{equation}
With this notation, the probability (denoted $p$ in the previous sections) that Player~1 has the large capacity is $p=p_1q$. 
Note that when $q=1$, this extension collapses to our original benchmark model.
Also note that a consequence of
\eqref{overest} is that Player~1 has a tendency of overestimating their capacity: if Player~1 has the weak belief, then they are always of the weak type, whereas if they have the strong belief, then they may still be of the weak type.
We adapt the strategy $a=(a_0,a_1)$ so that 
$a_0$ and $a_1$ now denote the conditional probabilities that Player~1 chooses $C=c_1$ given 'weak belief' and 'strong belief', respectively.

For simplicity, assume that $q>\hat p$, where $\hat p$ is defined in \eqref{phat}.
Now specify a triplet $(a^*,\tau^*, \Pi_0)\in\mathcal P\times\mathcal T\times\mathcal P$ by
\[a^*=(a_0^*,1),\]
\[\Pi_0=(0,\hat p\vee (p_1q))\]
and
\[\tau^*=(\infty,\tau_1^*),\]
where 
\[a_0^*=\left\{\begin{array}{cl}
\frac{p_1(q-\hat p)}{\hat p(1-p_1)} & p_1q<\hat p\\
1 & p_1q\geq\hat p\end{array}\right.
\]
and 
\[\tau^*_1=\inf\{t\geq 0:\tilde\Pi_t^{\Pi_0^1}\leq b\},\]
with $b$ as in \eqref{b}. It is then straightforward to check that $(a^*,\tau^*, \Pi_0)$ constitutes a PBE. 

\subsection{Adding an interview phase}

As a last extension of the set-up (i)-(iv), consider a situation where (iii) is replaced with
\begin{itemize}
\item[(iii')]
At time 0, Player 2 observes the salary claim
{\bf together with the result 'weak result' or 'strong result' of a test}, and subsequently also noisy observations of $\mu$, based upon which 
a choice is made of a stopping time $\tau$ to terminate the employment.
\end{itemize}

Thus, further to the salary claim $C$, Player~2 also receives information from an additional test result (cf. \cite{AP} and \cite{DG2})
with two possible outcomes. For definiteness, we assume that 
\[\P(\mbox{strong result}\vert \mu=\mu_0)=q\in(\frac{c_0}{c_1},1)\]
and 
\begin{equation}
    \label{deg}
\P(\mbox{strong result}\vert \mu=\mu_1)=1,
\end{equation}
which corresponds to a situation in which strong types always perform well in the interview, and weak types sometimes do. 
Note that the outcome of the interview test is not available for Player~1, which is similar to the situation studied in \cite{AP} and \cite{DG2}.

A belief system and a strategy of Player~2 can now be described as quadruples
\[\Pi_0=(\Pi_0^{0,weak},\Pi_0^{0,strong},\Pi_0^{1,weak},\Pi_0^{1,strong})\]
and
\[\tau=(\tau_0^{weak},\tau_0^{strong},\tau_1^{weak},\tau_1^{strong}),\] 
where for $i=0,1$, $\Pi_0^{i,weak}$, $\Pi_0^{i,weak}$, $\tau_i^{strong}$ and $\tau_i^{strong}$ are the beliefs and stopping times used provided $C=c_i$ and the test result 'weak result' or 'strong result' are observed, respectively.

By the assumption \eqref{deg}, if Player~2 observes 'weak result' in the test, then Player~1 is automatically of the weak type ($\Pi_0^{i,weak}=0$), and immediate firing ($\tau_1^{weak}=0$) would then be optimal in case $C=c_1$. 
Therefore, choosing $C=c_1$ is risky if Player 1 is of the weak type, and the obtained value from choosing $C=c_1$ (for the weak type player) is $qU(\Pi_0^{1,strong})$, with $U$ as in \eqref{U}.
We thus define the indifference point $\hat P$ via the indifference principle (cf. Section~\ref{easyemployee}) so that
\[qU(\hat P)=\frac{c_0}{r}\]
(note that $\hat P$ is well-defined since $q>c_0/c_1$ by assumption), and let 
\[a^*=(1,\frac{p(1-\hat P)}{\hat P(1-p)q}\wedge 1),\]
\[\Pi_0=(0,0,0,\Pi_0^{1,strong})\]
with 
\[\Pi_0^{1,strong}=\hat P\vee \frac{p}{p+(1-p)q},\]
and
\[\tau^*=(\infty,\infty,0, \tau_b)\]
with
\[\tau_b=\inf\{t\geq 0:\tilde\Pi_t^{\Pi_0^{1,strong}}\leq b\}.\]
It is then straightforward to check that $(a^*,\tau^*,\Pi_0)$ is a PBE. 

\begin{acknowledgement}
We are grateful to Kristoffer Glover for enlightening discussions, and for sharing with us preliminary notes on a problem in which Bayesian updating of a fund manager's skill plays a key role. We also thank Ola Andersson for stimulating discussions on the game theoretical aspects of the current work.
\end{acknowledgement}

\end{document}